\newtheorem{problem}{Problem}
\newtheorem*{problem*}{Problem}
\newtheorem{theorem}{Theorem}
\newcommand{\bsq}{\boldsymbol q}
\newcommand{\bsp}{\boldsymbol p}
\newcommand{\bsal}{\boldsymbol \alpha}
\newcommand{\bsla}{\boldsymbol \lambda}
\newcommand{\scp}[2]{{\left\langle {#1}\, , \, {#2}\right\rangle}}
\title{LDDMM Surface Registration with Atrophy Constraints}
\begin{document}
\maketitle
\begin{abstract}
Diffeomorphic registration using optimal control on the diffeomorphism group and on shape spaces has become widely used since the development of the Large Deformation Diffeomorphic Metric Mapping (LDDMM) algorithm.  More recently, a series of algorithms involving sub-riemannian constraints have been introduced, in which the velocity fields that control the shapes in the LDDMM framework are constrained in accordance with a specific deformation model. Here, we extend this setting by considering, for the first time, inequality constraints, in order to estimate surface deformations that only allow for atrophy, introducing for this purpose an algorithm that uses the augmented lagrangian method. We also provide a version of our approach that uses a weaker constraint in which only the total volume is forced to decrease. These developments  are illustrated by numerical experiments on brain data.  
\end{abstract}
\section{Introduction}
Over the last couple of decades, multiple studies have provided evidence of anatomical differences between control groups and cognitively impaired groups at the population level, for a collection of diseases, including schizophrenia, depression, Huntington's or dementia \cite{oishi2011multi,faria2011quantitative,cavedo2011local,poulin2011amygdala,younes2012regionally,tosun2013neuroimaging,apostolova20103d,lorenzi20104d,tang2014baseline,younes2014inferring,miller2014amygdalar}. In the particular case of neuro-degenerative diseases, a repeated objective has been to design anatomical biomarkers, measurable from imaging data, that would allow for individualized detection and prediction of the disease. This goal has become even more relevant with the recent emergence of longitudinal studies, involving patients at early stages or ``converters'' which showed that, when the effect is measured at the population level, anatomical changes caused by diseases like Alzheimer's or Huntington's were happening several years before cognitive impairment can be detected on individual subjects. 

Surface registration \cite{vaillant2005currents} using the LDDMM algorithm is a powerful tool for the analysis of shape variation in ROIs represented by triangulated surfaces. While one its main advantages is its flexibility and its ability to render smooth, diffeomorphic, free-form shape changes, there are situations in which prior information is available, and should be used to enhance the results of the shape analysis. In this paper, we focus on situations in which no tissue growth is expected to occur (like with brain longitudinal data). In such contexts, it is natural to ensure that shape analysis should only detect atrophy, even when noise and inaccuracy in the ROI segmentation process may lead in the other direction. (Here, we mean ``atrophy'' in the general sense of local volume loss.) In this paper, we introduce an {\em atrophy-constrained} registration algorithm, that include some of the ideas introduced in \cite{arguillere2014shape}, while extending them to inequality constraints associated to the problem we consider. This algorithm will be described in section \ref{sec:at.lddmm}, with our numerical approach discussed in section \ref{sec:num}. Some theoretical results on existence of solutions and consistency of discrete approximations are provided in section \ref{sec:theorem}. An extension of the algorithm to include affine alignment and its restriction to scalar volume decrease constraint are provided in section \ref{sec:affine} and \ref{sec:volume}. Finally, experimental results are provided in section \ref{sec:exp}.

\section{Atrophy-constrained LDDMM}
\label{sec:at.lddmm}

\subsection{Continuous Optimal Control Problem} 
 The LDDMM algorithm implements an ``optimal control'' strategy in which a template surface $S_0$ is ``driven'' toward a target surface $S_1$ via a time-dependent process $t\mapsto S(t)$, with $S(0)=S_0$. This is achieved by minimizing
\begin{equation}
\label{eq:lddmm.1}
\frac12 \int_0^1 \|v(t)\|_V^2 dt+D(S(1),S_1 )
\end{equation}
subject to the state equation $dS/dt=v(t,S(t))$  where $v$ is a smooth velocity field on $\mathbb R^3$. By $\|v\|_V^2$, we mean a functional norm in a reproducing kernel Hilbert space (RKHS) $V$, that we will assume to be embedded in $C^p_0(\mathbb R^d, \mathbb R^d)$ (the completion, for the standard supremum norm of up to $p$ derivatives, of the space of compactly-supported infinitely differentiable vector fields) with $p\geq 1$. This space can, for example, be defined as the Hilbert completion of 
\[
\|v\|_V^2 =\int_{\mathbb R^3} (Av(x))^T v(x)dx
\]
(originally defined for smooth vector fields),
where $A$ is a differential operator. More generally, letting $\mathbb A: V \mapsto V^*$ be the Hilbert duality mapping, and $\mathbb K = \mathbb A^{-1}$, the
 reproducing kernel of $V$, also denoted $\mathbb K$, is a mapping defined on $\mathbb R^3 \times \mathbb R^3$, with values in $\mathcal M_3(\mathbb R)$ (the set of 3 by 3 real matrices) such that, letting $\mathbb K^i(x,y)$ denote the $i$th column of $\mathbb K(x,y)$, the vector field  $\mathbb K^i(\cdot, y) : x \mapsto \mathbb K^i(x,y)$ belongs to $V$ for all  $y\in \mathbb R^3$ with, for all $v\in V$,
 $\scp{\mathbb K^i(\cdot, y)}{v}_V = v_i(y)$, the $i$th coordinate of $v(y)$. To simplify the notation, we will assume in this paper that $\mathbb K$ is a scalar kernel, i.e., that it takes the form $\mathbb K(x,y) = K(x,y)\, \mathrm{Id}_{\mathbb R^3}$ where $K$ is-scalar valued.

The function $D$ in \eqref{eq:lddmm.1} is a measure of discrepancy that penalizes the difference between the controlled surface $S(\cdot)$ at the end of its evolution and the target surface $S_1$.  
Among the measures introduced in the literature in combination with the LDDMM algorithm, the most convenient computationally are designed as Hilbert space norms between surfaces considered as linear forms over spaces of smooth structures. The simplest example,  linear forms on smooth scalar functions arising from integrating functions over surfaces, yields the ``measure matching'' cost introduced in \cite{glaunes2004diffeomorphic,glaunes2006modeling}).  ``Current matching'', introduced in \cite{glaunes2008large,vaillant2005currents} results from integrating smooth differential forms over oriented surfaces.  More recently varifold-based cost functions \cite{charon2013varifold} were designed, in which functions defined on $\mathbb R^3 \times \mathit{Gr}(2,\mathbb R^3)$ (the Grassmannian manifold of 2D spaces in $\mathbb R^3$) are integrated over the surface. Details on these cost functions, their discrete versions on triangulated surfaces, and the computation of their gradient  are provided in the cited references.

 In optimal control language, $v$ is the ``control'',  $S$ is the ``state'', and $v$ is optimized in order to bring the state close to a desired endpoint. With this construction, each point $x_0$ in $S_0$ is registered to a point $x(t)= \varphi(t,x_0 )$ in $S(t)$ that evolves according to the differential equation $dx/dt=v(t,x)$, with $x(0)=x_0$. The overall evolution is diffeomorphic, i.e., for each time t, $\varphi(t,\cdot)$ can be extended to a smooth invertible transformation with smooth inverse on $\mathbb R^3$. 

To define our atrophy constraints, we assume that surfaces are closed and oriented. We let $N_0(t, x_0)$ be the outward-pointing unit normal to $S_0$. An outward-pointing normal to $S(t)$ at $x= \varphi(t,x_0)$ is then given by
\begin{equation}
\label{eq:norm.0}
N(t, x) = d\varphi(t,x_0)^{-T}N_0(t, x_0)
\end{equation}
where $d\varphi(t, x_0)$ denote the differential of $y \mapsto \varphi(t, y)$ at $y=x_0$ (a 3 by 3 matrix), with the $-T$ exponent indicating the inverse transposed ($N(t, \cdot)$ does not necessarily have norm one).

We will express our atrophy constraint by the fact that the surface evolves inward at all points, i.e., by
$
v(t,x)^TN(t,x) \leq 0
$
for all $x\in S(t)$ and  $t\in [0,1]$. Adding this constraint to the original surface-matching LDDMM problem leads to the atrophy-constrained problem 
\[
\frac12 \int_0^1 \|v(t)\|_V^2 dt+D(S(1),S_1 )\to \min
\]
subject to  $\partial _t S=v(t,S(t))$, $v(t,x)^TN(t,x) \leq 0$.\\

We now reformulate this problem under the assumption that $S_0$ is parametrized with an embedding  $q_0: M\to \mathbb R^3$, where $M$ is a two-dimensional Riemannian manifold. This is no loss of generality, since one can always take $M=S_0$ and $q_0 = \mathrm{identity}$. We take parametrizations $q: M\to \mathbb R^3$ as state variables, together with functions $N: M \to \mathbb R^3$ and solve
\begin{problem}
\label{prob:1}
Minimize
\begin{equation}
\label{eq:prob1.1}
\frac12 \int_0^1 \|v(t)\|_V^2 dt+D(q(1,M),S_1 ),
\text{ subject to } 
\begin{cases}
q(0, \cdot) = q_0, N(0, \cdot) = N_0,\\
\partial_t q(t, \cdot) = v(t,q(t, \cdot)),\\
 \partial_t N(t, \cdot) = - dv(t,q(t, \cdot))^T N(t, \cdot),\\
  v(t, q(t,\cdot))^T N(t, \cdot) \leq 0
\end{cases}
\end{equation}
\end{problem}
where, with a slight change of notation, $N_0(m)$ is the outward-pointing unit normal to $S_0$ at $q_0(m)$. $N(t,m)$ is then an outward-pointing (not necessarily unit) normal to $S(t) = q(t,M) = \varphi(t, S_0)$ at $q(t,m)$.  The third equation in the constraints is the time derivative of \eqref{eq:norm.0}.

\subsection{Discrete Approximations}
We now assume that surfaces are triangulated, identifying $S$ with a pair $(\boldsymbol q, F)$, where $\boldsymbol q \in (\mathbb R^3)^n$ is a set of  $ n$ vertices (where $n$ depends on $S$) and $F\subset \{1,\ldots,n\}^3$ lists the indices of vertices that form the triangular faces. We assume that the surface is oriented, so that an edge which belongs to two faces is oriented in different directions in each face. If $\boldsymbol q = (q_1, \ldots, q_n)$, and $f = (i,j,k)\in F$, the area weighted normal to $f$ is
\begin{equation}
\label{eq:n1}
N(\boldsymbol q, f) = \frac12 (q_j - q_i)\times (q_k-q_i),
\end{equation}
which is invariant by circular permutation of $i$, $j$ and $k$. From this we define the area-weighted normal at vertex $q_k$ by
\begin{equation}
\label{eq:n2}
N_k(\boldsymbol q, F) = \sum_{f\in F: k\in f} N(\boldsymbol q, f)
\end{equation}
with $\boldsymbol N(\boldsymbol q, F) = ( N_1(\boldsymbol q, F), \ldots, N_n(\boldsymbol q, F))$.

To define a  discrete version of Problem \ref{prob:1}, we introduce a small relaxation parameter $\varepsilon\geq 0$ and  state variables $\boldsymbol q = (q_1, \ldots, q_n)$ and $\boldsymbol N  = (N_1, \ldots, N_n)$, initialized with a initial surface $S_0 = (\boldsymbol q_0, F_0)$, with a target surface $S_1= (\boldsymbol q_1, F_1)$. The discrete problem will minimize
\[
\frac12 \int_0^1 \|v(t)\|_V^2 dt+D( S(1), S_1 ),
\text{ subject to } 
\begin{cases}
\boldsymbol q(0, \cdot) = \boldsymbol q_0, \boldsymbol N(0) = \boldsymbol N(q_0, F_0),\\
\partial_t q_k(t) = v(t,q_k(t)),\\
 \partial_t N_k(t) = - dv(t,q_k(t))^T N_k(t),\\
  v(t, q_k(t))\cdot N_k(t) \leq \varepsilon |N_k(t)|,\  k = 1, \ldots, n
\end{cases}
\]
with $S(t)  = (\boldsymbol q(t), F_0)$. The introduction of the parameter $\varepsilon$ is justified by Theorem \ref{th:convergence}, and is also natural given discrete approximation errors.\\

Given that the end-point cost and the constraints depends on $v$ only through the  configurations $\boldsymbol q(t)$, one shows, using standard RKHS reductions, that the optimal $v$ takes the form
\[
v(t, \cdot) = \sum_{k=1}^n K(\cdot, q_k(t)) \alpha_k(t)
\]
where $K$ is the reproducing kernel of $V$. Using this, the previous problem reduces to
\begin{problem}
\label{prob:2}
Minimize
\begin{equation}
\label{eq:prob2.1}
\frac12 \int_0^1 \sum_{k,l=1}^n K(q_k(t), q_l(t)) \alpha_k(t)\cdot \alpha_l(t)  dt+D( S(1), S_1 )
\end{equation}
\begin{equation}
\label{eq:prob2.2}
\text{subject to  } \quad
\begin{cases}
\displaystyle
\boldsymbol q(0, \cdot) = \boldsymbol q_0, \boldsymbol N(0) = \boldsymbol N(q_0, F_0),\\
\displaystyle
\partial_t q_k(t) = \sum_{l=1}^n K(q_k(t), q_l(t))\alpha_l(t),\\
\displaystyle
 \partial_t N_k(t) = - \sum_{l=1}^n \partial_1 K(q_k(t), q_l(t)) N_k(t)\cdot \alpha_l(t),\\
  \sum_{l=1}^n K(q_k(t), q_l(t))\alpha_l(t) \cdot N_k(t) \leq \varepsilon |N_k(t)|, k = 1, \ldots, n
\end{cases}
\end{equation}
with $S(t)  = (\boldsymbol q(t), F_0)$.
\end{problem}

However, in the discrete case, it is possible to avoid the introduction of $\boldsymbol N$ as a state variable and solve instead:
\begin{problem}
\label{prob:3}
Minimize
\begin{equation}
\label{eq:prob3.1}
\frac12 \int_0^1 \sum_{k,l=1}^n K(q_k(t), q_l(t)) \alpha_k(t)\cdot \alpha_l(t)  dt+D( S(1), S_1 )
\end{equation}
\begin{equation}
\label{eq:prob3.2}
\text{subject to  } \quad
\begin{cases}
\displaystyle
\boldsymbol q(0, \cdot) = \boldsymbol q_0, \\
\displaystyle
\partial_t q_k(t) = \sum_{l=1}^n K(q_k(t), q_l(t))\alpha_l(t),\\
\displaystyle
 \sum_{l=1}^n K(q_k(t), q_l(t))\alpha_l(t) \cdot N_k(\boldsymbol q(t), F_0) \leq \varepsilon | N_k(\boldsymbol q(t), F_0)|, \quad k = 1, \ldots, n
\end{cases}
\end{equation}
with $S(t)  = (\boldsymbol q(t), F_0)$.
\end{problem}
Note that the apparent simplification is balanced by the fact that the constraint becomes a more complex function of the state and controls, with $N_k(\boldsymbol q, F_0)$ given by \eqref{eq:n1} and \eqref{eq:n2}.

\section{Numerical Method}
\label{sec:num}

Problem \ref{prob:3} is solved using augmented Lagrangian methods, introducing, as described in \cite{nocedal2006numerical}, slack variables to complete inequality constraints. More precisely, let 
\[
C_{kl}(\boldsymbol q) = K(q_k, q_l) N_k(\boldsymbol q, F_0)^T 
\]
and $C (\bsq) = (C_{kl}(\bsq), k,l=1, \ldots,n)$ the associated $n \times 3n$ matrix.
Let $K(\bsq)$ be the $3n\times 3n$ matrix formed from blocks $(K(q_k,q_l)\, \mathrm{Id}_{\mathbb R^3}, k,l=1, \ldots, n)$.  For a vector $u$, let  $u^+$ denote the vector formed with the positive parts of each of the coordinates of $u$. Let $|\boldsymbol N|$ denote the $n$-dimensional vector will $k$th coordinates equal to $|N_k|$.

The augmented Lagrangian is defined by
\begin{multline}
\label{eq:aug.lag}
F(\bsal, \bsla) = \frac12 \int_0^1 \bsal(t)^TK(\bsq(t))\bsal(t) dt + D(S(1), M) \\
+ \frac\mu2 \int_0^1 \left|\left(C(\bsq(t)) \bsal(t) - \varepsilon |\boldsymbol N(t)|-\frac{\bsla(t)}{\mu}\right)^+\right|^2 dt - \frac1{2\mu}\int_0^1 |\bsla(t)|^2dt.
\end{multline}
Here, the parameter $\mu$ is a small positive number, $\bsla \in \mathbb R^n$ is a Lagrange multiplier and $\bsq$ is considered as a function of $\bsal$ via the state equation $\partial_t \bsq = K(\bsq) \bsal$. The augmented Lagrangian optimization procedure iterates the following  steps (starting with initial values $(\bsal_0, \bsla_0)$):
\begin{enumerate}
\item Minimize $\bsal \mapsto F(\bsal, \bsla_n)$ to obtain a new value $\bsal_{n+1}$ (and  $\bsq_{n+1}$ via the state evolution equation).
\item Update $\bsla$, with $\bsla_{n+1} = -  \mu\left(C(\bsq_{n+1}) \bsal_{n+1} - \varepsilon |\boldsymbol N|-\mu\bsla_n\right)^+$.
\item If needed (e.g., if $|(C(\bsq_{n+1}) \bsal_{n+1} - \varepsilon |\boldsymbol N|)^+|$ is larger than a threshold $\delta_n$), replace $\mu$ by a smaller number, $\rho\mu$, with $\rho < 1$.
\end{enumerate}

The most expensive step is, of course, the first one, which requires to solve an optimal control problem similar in complexity to the unconstrained problem. The computation of $\nabla_{\bsal} F(\alpha, \lambda)$ (the gradient of $F$ with respect to $\bsal$) uses a back-propagation algorithm, also called the adjoint method. Since similar computations are described in multiple places \cite{vaillant2005currents,younes2009evolutions,Cotter:2009,azencott2010diffeomorphic}, we briefly summarize it here. 
\begin{enumerate}
\item Given $\bsal$, compute the associated state $\bsq$ via $\partial_t \bsq = K(\bsq)\bsal$ and the matrix $C(\bsq)$.
\item Introduce a co-state $\bsp(t) = (p_1, \ldots, p_n)\in (\mathbb R^3)^n$, $t\in [0,1]$, defined by \\$\bsp(1) =  \nabla_q D((\bsq(1),F_0), S_1)$ and 
\[
\partial_t \bsp = - \nabla_{\bsq} \left(\bsp^TK(\bsq)\bsal -\frac12\bsal^TK(\bsq)\bsal - \frac\mu2 \left|\left(C(\bsq)\bsal- \varepsilon |\boldsymbol N|- \frac\bsla\mu\right)^+\right|^2\right)
\]
\item Define
$
\displaystyle
\nabla_{\bsal} F = K(\bsq)(\bsal - \bsp)+ \mu \left(\left(C(\bsq)\bsal- \varepsilon |\boldsymbol N| -\frac\bsla\mu\right)^+\right)^T C(\bsq).
$
\end{enumerate}

\section{Existence of Solutions and Convergence}
\label{sec:theorem}
\begin{theorem}
\label{th:existence}
Assume that $V$ is continuously embedded in $C^p_0(\mathbb R^3, \mathbb R^3)$ for $p \geq 2$ and that the data attachment term $D$ is such that 
$\varphi \mapsto D(\varphi(S_0), S_1)$,
defined over all $C^p$-diffeomorphisms $\varphi$ such that $\varphi-\mathit{id}\in C^p_0$, is continuous for the uniform $C^{p}$ convergence over compact sets. Then Problems \ref{prob:1}, \ref{prob:2} and \ref{prob:3} always have an optimal solution $v\in L^2(0,1, V)$. 
\end{theorem}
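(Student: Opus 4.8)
The plan is to run the direct method of the calculus of variations, just as for unconstrained LDDMM, the only genuinely new point being the passage to the limit in the inequality constraint. I would first reduce everything to Problem \ref{prob:1}. Problems \ref{prob:2} and \ref{prob:3} are the special case in which $M$ is the finite set $\{1,\dots,n\}$, with $q_0$ the vertex positions and $N_0$ the (area-weighted) vertex normals — the normalization of $N_0$ being irrelevant since both the constraint and the $N$-equation are positively homogeneous in $N$ — the only wrinkle being that in Problem \ref{prob:3} the normals are the continuous algebraic functions $N_k(\bsq,F_0)$ of \eqref{eq:n1}--\eqref{eq:n2} rather than solutions of the linear ODE. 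Moreover, minimizing over the reduced control $\bsal$ is equivalent to minimizing the same cost over $v\in L^2(0,1;V)$ subject to $\partial_t q_k=v(t,q_k)$ and the constraint: given an admissible $v$, its pointwise-in-time orthogonal projection $\bar v(t)$ onto $\mathrm{span}\{K(\cdot,q_k(t))e_i\}$ has $\|\bar v(t)\|_V\le\|v(t)\|_V$ and $\bar v(t,q_k(t))=v(t,q_k(t))$ by the reproducing property, hence leaves the trajectory, the constraint and $D$ unchanged, so an optimal $v$ reduces to an optimal $\bsal$. Since $v\equiv0$ is admissible, the infimum is finite; I take a minimizing sequence $v_n$, use the energy bound to get $\|v_n\|_{L^2(0,1;V)}\le C$, and extract, by reflexivity of the Hilbert space $L^2(0,1;V)$, a subsequence with $v_n\rightharpoonup v$ weakly.

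The first real step is to show the flows converge. By the standard composition and Gr\"onwall estimates for flows of $L^2$-in-time vector fields valued in $C^p_0$ (see \cite{younes2009evolutions} and references therein), each $\varphi^{v_n}(t,\cdot)$ is a $C^p$-diffeomorphism with $\varphi^{v_n}(t,\cdot)-\mathit{id}\in C^p_0$, bounded there uniformly in $(t,n)$ and $\tfrac12$-H\"older in $t$. Arzel\`a--Ascoli then makes the sequence precompact in $C([0,1],C^{p-1}_{\mathrm{loc}})$, and passing to the limit in the integral form of the flow equation — writing the nonlinearity as $\scp{v_n(s)}{K(\cdot,\varphi^{v_n}(s,x))}_V$ and noting that the test fields $s\mapsto K(\cdot,\varphi^{v_n}(s,x))$ converge \emph{strongly} in $L^2$ while $v_n\rightharpoonup v$ — identifies the limit as the flow $\varphi^v$ of $v$. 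Consequently $q_n(t,m)=\varphi^{v_n}(t,q_0(m))\to q(t,m)$ and, since $p\ge2$ gives $p-1\ge1$, $d\varphi^{v_n}(t,\cdot)\to d\varphi^{v}(t,\cdot)$ uniformly on compacts; as the differentials stay uniformly invertible, $N_n(t,m)=d\varphi^{v_n}(t,q_0(m))^{-T}N_0(m)\to N(t,m)$ uniformly in $(t,m)$. The state equations, equivalent to $q=\varphi^v\circ q_0$ and $N=(d\varphi^v\circ q_0)^{-T}N_0$, hold for the limit by construction.

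Next I pass to the limit in the constraint. Fix $m\in M$ and $0\le\psi\in L^2(0,1)$. By the reproducing property for the scalar kernel, $v_n(t,q_n(t,m))^TN_n(t,m)=\scp{v_n(t)}{K(\cdot,q_n(t,m))N_n(t,m)}_V$, and since $x\mapsto K(\cdot,x)$ is continuous from $\mathbb R^3$ to $V$ while $q_n,N_n$ converge uniformly, the field $t\mapsto\psi(t)K(\cdot,q_n(t,m))N_n(t,m)$ converges strongly in $L^2(0,1;V)$; pairing it with $v_n\rightharpoonup v$ yields $0\ge\int_0^1\psi(t)\,v_n(t,q_n(t,m))^TN_n(t,m)\,dt\to\int_0^1\psi(t)\,v(t,q(t,m))^TN(t,m)\,dt$. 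As $\psi\ge0$ is arbitrary, $v(t,q(t,m))^TN(t,m)\le0$ for a.e.\ $t$, and — in the continuous case, since $m\mapsto v(t,q(t,m))^TN(t,m)$ is continuous for a.e.\ fixed $t$ — for all $m$. For the $\varepsilon$-relaxed constraints one replaces $0$ by $\varepsilon|N_k(t)|$, using $|N_{n,k}(t)|\to|N_k(t)|$; for Problem \ref{prob:3} one instead uses $N_k(\bsq_n(t),F_0)\to N_k(\bsq(t),F_0)$ from continuity of \eqref{eq:n1}--\eqref{eq:n2}. Thus the limit $v$ is admissible.

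Finally, lower semicontinuity closes the argument: $D(q_n(1,M),S_1)=D(\varphi^{v_n}(1,\cdot)(S_0),S_1)\to D(\varphi^{v}(1,\cdot)(S_0),S_1)$ by the assumed continuity of $D$ under uniform convergence of diffeomorphisms (the data terms used in practice — measure, current, varifold norms — are already continuous under the $C^1$-convergence supplied above, which $p\ge2$ guarantees), while $v\mapsto\tfrac12\int_0^1\|v(t)\|_V^2\,dt=\tfrac12\|v\|_{L^2(0,1;V)}^2$ is weakly lower semicontinuous; hence the cost at $v$ is at most $\liminf$ of the costs at $v_n$, i.e.\ the infimum, and being admissible $v$ is optimal. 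I expect the main obstacle to be the second paragraph: squeezing out of \emph{weak} convergence of the controls enough convergence of the flows — up to first derivatives, which is exactly what the normal field $N$ needs and the reason $p\ge2$ is imposed — together with the slightly delicate observation in the third paragraph that the pointwise inequality survives the limit only because it can be recast as a duality pairing against a strongly convergent family in $V$.
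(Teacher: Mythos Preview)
Your argument is correct and follows the paper's almost step for step: direct method, weak compactness in $L^2(0,1;V)$, convergence of the flows and their derivatives from the standard LDDMM estimates, continuity of $D$, and weak lower semicontinuity of the energy. The one place you diverge is the passage to the limit in the inequality: the paper notes that $t\mapsto v_n(t,q_n(t,m))^TN_n(t,m)$ converges weakly in $L^2(0,1;\mathbb R)$ and then invokes weak closedness of the convex cone of a.e.\ nonpositive functions, whereas you rewrite this quantity via the reproducing property as $\scp{v_n(t)}{K(\cdot,q_n(t,m))N_n(t,m)}_V$ and pair weak convergence of $v_n$ against strong $L^2(0,1;V)$ convergence of the kernel factor. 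These are two packagings of the same mechanism; yours is a bit more explicit about \emph{why} weak convergence suffices and, as a bonus, makes transparent the reduction of Problems~\ref{prob:2} and~\ref{prob:3} to the same framework, which the paper only mentions in one line.
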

\begin{proof}
The theorem is proved along the same lines as similar statements addressing the existence of solutions for LDDMM problems  \cite{trouve1995action,Dupuis1998,younes2010shapes,younes2012constrained,arguillere2014shape}. Considering Problem \ref{prob:1}, and starting with a minimizing sequence $v^n$, the boundedness of $v^n$ in $L^2(0,1; V)$ implies that (extracting a subsequence if needed) $v^n$ converges weakly in this Hilbert space to a limit $v$ (with a norm in $L^2(0,1;V)$ no larger than the $\liminf$ of the norms of $v^n$). This, in turn, implies that the associated flows of diffeomorphisms $\varphi^n$ (associated to $v^n$) and their first $p$ spatial derivatives, converge uniformly (in time and space) over compact sets to $\varphi$ (associated to $v$) and its first $p$ derivatives. One gets from this that $D(\varphi^n(1,S_0), S_1)$ converges to $D(\varphi(S_0), S_1)$. Letting $(q^n, N^n)$ denote the state in Problem 1 associated to the control $v^n$, we  have $q^n(t) = \varphi^n(t) \circ q_0\to q$. This implies that the cost function is minimized at $v$.  Note that $N^n(t) = d\varphi^n(t)^{-T}N_0$ and therefore converges to $N(t)$. 

We now show that $v(t, q(t, x))^TN(t,x)\leq 0$ for all $x\in M$ and almost all $t\in [0,1]$. Fixing $x$, one easily deduces from the facts that $v^n$ converges weakly to $v$, $dv^n$ is uniformly bounded, $q^n$ and $N^n$ converge uniformly to $q$ and $N$, that 
$v^n(t, q^n(t, x))^TN^n(t,x)$ converge weakly to $v(t, q(t, x))^TN(t,x)$ in $L^2(0,1;\mathbb R)$, which implies that $v(t, q(t, x))^TN(t,x) \leq 0$ for almost all $t$ (the set of non-positive a.e functions is a closed convex set in $L^2$ and therefore also weakly convex). By considering a countable dense set of $x's$, and using the fact that $x \mapsto v(t, q(t, x))^TN(t,x)$ is continuous, we find that  $v(t, q(t, x))^TN(t,x) \leq 0$ for all $x\in M$ and almost all $t\in[0,1]$.
The same argument can be used to prove existence of solutions for Problems \ref{prob:2} and \ref{prob:3}. 
\end{proof}

Next, we adress the problem of convergence as the triangulation is refined. More precisely, as the triangulation of a fixed initial surface $S_0$ gets finer and finer, do solutions of Problems \ref{prob:2} and \ref{prob:3} converge to solutions of Problem \ref{prob:1}. We will see that we need to slightly relax the constraints. 
\begin{theorem}
\label{th:convergence}
Assume that $V$ is continuously embedded in $C^p_0(\mathbb R^3, \mathbb R^3)$ for $p \geq 2$, with $\Vert v\Vert_V\leq c\max_{x\in \mathbb R^3}(\vert v(x)\vert,\vert dv(x)\vert)$ where $\vert v(x)\vert$ is the Euclidean norm of $v(x)$ and $\vert dv(x)\vert$ is the operator norm of $dv(x)$ and $c>0$ is fixed. Also assume that if a sequence of triangulations $(S^n)_{n\in\mathbb N}$ converges to a surface $S$ in the sense of currents, then $D(S^n,S_1)\rightarrow D(S,S_1)$.

Let $(S^n_0)_{n\in\mathbb N}=({\bf q}_0^n,T^n)_{n\geq 3}$ be an increasingly fine triangulation with $n$ points of a compact surface $S_0$  of class $\mathcal{C}^2$ with normal vector field $N$. We assume $q_{k,0}^{n_1}=q_{k,0}^{n_2}$ for any $k\leq\min(n_1,n_2)$ (so that $q_k,\ k\in\mathbb N$ can be defined independently of $n$), that the averaged normals $N^n_{k,0}$ at $q_{k,0}^n$ converge to $N(q_{k,0}^n)$ after normalization as $n$ goes to infinity, and that $\{q_k,\ k\in\mathbb N\}$ is dense in $S_0$. Then, there exists a decreasing sequence or real numbers $\varepsilon^n>0$ 
with $\varepsilon^n\rightarrow0$, such that if  $v^n\in L^2(0,1;V)$ solves Problem \ref{prob:2} with $\varepsilon = \varepsilon^n$, 
then the sequence $(v^n)_{n\in\mathbb N}$ is bounded and any weak limit point $v^*$ of $v^n$ is a solution to Problem \ref{prob:1}.
\end{theorem}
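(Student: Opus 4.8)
The plan is to adapt the variational argument of Theorem~\ref{th:existence}, adding two new ingredients: a reparametrization that turns any admissible control of Problem~\ref{prob:1} into a competitor whose $C^1$-norm is bounded uniformly in time, and a choice of $\varepsilon^n$ which is small but still dominates the error between the normalized discrete averaged normals and the true normal of $S_0$. To begin, for every $n$ the zero control is admissible for the discrete problem with $\varepsilon=\varepsilon^n>0$ (its constraints read $0\le\varepsilon^n|N^n_k|$) and has cost $D(S_0^n,S_1)$, which stays bounded since $S_0^n\to S_0$ as currents and $D$ is current-continuous; in particular (using that $C^1$-convergence of $\varphi$ forces current convergence of $\varphi(S_0)$, so that Theorem~\ref{th:existence} applies) Problem~\ref{prob:2} with $\varepsilon=\varepsilon^n$ has a solution $v^n$, and since $D\ge 0$, $\tfrac12\int_0^1\|v^n\|_V^2\,dt\le D(S_0^n,S_1)\le C$. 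Thus $(v^n)$ is bounded in $L^2(0,1;V)$; given a weak limit point $v^*$, pass to a subsequence with $v^n\rightharpoonup v^*$, so that, exactly as in Theorem~\ref{th:existence}, the flows $\varphi^n$ of $v^n$ and their first spatial derivatives converge to those, $\varphi^*$, of $v^*$, uniformly on compacts and uniformly in $t$.

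Next, $v^*$ is admissible for Problem~\ref{prob:1}: in Problem~\ref{prob:2} one has $N^n_k(t)=d\varphi^n(t,q_k)^{-T}N^n_{k,0}$, so dividing the $k$th constraint by $|N^n_{k,0}|>0$ and writing $\hat N^n_{k,0}=N^n_{k,0}/|N^n_{k,0}|$ gives, for all $n\ge k$,
\[
v^n\bigl(t,\varphi^n(t,q_k)\bigr)^{T} d\varphi^n(t,q_k)^{-T}\hat N^n_{k,0}\ \le\ \varepsilon^n\,\bigl|d\varphi^n(t,q_k)^{-T}\hat N^n_{k,0}\bigr|\qquad\text{for a.e.\ }t.
\]
The right-hand side is at most $\varepsilon^n$ times an $n$- and $t$-uniform bound on $\|d\varphi^n(t)^{-1}\|$, so it tends to $0$; the left-hand side converges weakly in $L^2(0,1;\mathbb R)$ to $v^*(t,\varphi^*(t,q_k))^{T}d\varphi^*(t,q_k)^{-T}N(q_k)$, by the weak-convergence argument of Theorem~\ref{th:existence} (using $v^n\rightharpoonup v^*$, the uniform bound on $dv^n$, and $d\varphi^n(t,q_k)^{-T}\hat N^n_{k,0}\to d\varphi^*(t,q_k)^{-T}N(q_k)$ uniformly in $t$, the last step using the hypothesis on the normalized averaged normals). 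Since the nonpositive functions form a weakly closed subset of $L^2(0,1;\mathbb R)$, the limit is nonpositive for $t$ outside a null set $T_k$; on $\bigcap_k T_k$ it is nonpositive at every $q_k$, and by density of $\{q_k\}$ in $S_0$ and continuity in the base point, $v^*(t,q^*(t,x))^{T}N^*(t,x)\le 0$ for all $x\in S_0$ and a.e.\ $t$.

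For optimality, set $\delta^n=\max_{k\le n}|\hat N^n_{k,0}-N(q_k)|$; since $S_0$ is of class $\mathcal C^2$ and the triangulations are increasingly fine, $\delta^n\to0$, and we fix $\varepsilon^n$ decreasing with $\varepsilon^n\to0$ and $\varepsilon^n/\delta^n\to\infty$ (e.g.\ $\varepsilon^n=\sup_{m\ge n}(\sqrt{\delta^m}+1/m)$). Let $v$ be any admissible control of Problem~\ref{prob:1}. Reparametrizing time so that $t\mapsto\|v(t)\|_V$ is constant produces $\tilde v$ with the same time-$1$ diffeomorphism, with $\tfrac12\int_0^1\|\tilde v\|_V^2\,dt\le\tfrac12\int_0^1\|v\|_V^2\,dt$, still admissible (the constraint is merely multiplied by a positive rescaling factor), and with $\sup_t\|\tilde v(t)\|_{C^1}=:C_0<\infty$ by the embedding $V\hookrightarrow C^1_0$. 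For this bounded admissible $\tilde v$, the $k$th discrete constraint at level $n$ differs from the (nonpositive) continuous one by $\tilde v(t,\tilde q_k(t))^{T}d\tilde\varphi(t,q_k)^{-T}(\hat N^n_{k,0}-N(q_k))$, whose modulus is $\le C_0C_1\delta^n$ with $C_1$ a uniform bound on $\|d\tilde\varphi(t)^{-1}\|$, while the right-hand relaxation is $\ge\varepsilon^n/C_1$; since $\varepsilon^n/\delta^n\to\infty$, $\tilde v$ satisfies the discrete constraints with $\varepsilon=\varepsilon^n$ once $n\ge n_0(v)$. Writing $\mathrm{cost}_n$ for the cost functional of the un-reduced discrete problem at level $n$, whose infimum coincides with that of Problem~\ref{prob:2} and is attained at $v^n$, we have $\mathrm{cost}_n(v^n)\le\mathrm{cost}_n(\tilde v)=\tfrac12\int_0^1\|\tilde v\|_V^2\,dt+D(S^n_{\tilde v}(1),S_1)$, where $S^n_{\tilde v}(1)$ is the image under the fixed $C^1$ diffeomorphism $\tilde\varphi(1)$ of an increasingly fine triangulation of $S_0$, hence converges to $\tilde\varphi(1)(S_0)=\varphi(1)(S_0)$ in currents; thus $\mathrm{cost}_n(\tilde v)\to\tfrac12\int_0^1\|\tilde v\|_V^2\,dt+D(\varphi(1)(S_0),S_1)\le\tfrac12\int_0^1\|v\|_V^2\,dt+D(\varphi(1)(S_0),S_1)$, the last expression being the value of \eqref{eq:prob1.1} at $v$. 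Passing to $\liminf_n$ and using weak lower semicontinuity of $w\mapsto\tfrac12\int_0^1\|w\|_V^2\,dt$ together with $S^n_{v^n}(1)\to\varphi^*(1)(S_0)$ in currents,
\[
\tfrac12\int_0^1\|v^*\|_V^2\,dt+D(\varphi^*(1)(S_0),S_1)\ \le\ \liminf_n\mathrm{cost}_n(v^n)\ \le\ \tfrac12\int_0^1\|v\|_V^2\,dt+D(\varphi(1)(S_0),S_1).
\]
As $v$ was an arbitrary admissible control and $v^*$ is admissible, $v^*$ solves Problem~\ref{prob:1}.

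The hard part is this optimality step, where $\varepsilon^n$ must simultaneously tend to $0$, dominate uniformly the normal-approximation error over the $n$ active vertex constraints, and remain usable pointwise in $t$; the last point is exactly why the continuous competitor fed to the discrete problem must have $C^1$-norm bounded in time, and the constant-speed reparametrization is the device that supplies such a competitor without increasing the cost or breaking admissibility. The remaining pieces — boundedness via the zero control, the weak passage to the limit in the constraints, weak lower semicontinuity of the action, and the current-convergence of pushed-forward triangulations — are routine adaptations of the arguments behind Theorem~\ref{th:existence}.
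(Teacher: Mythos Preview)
Your proof is correct and follows essentially the same strategy as the paper's: bound $(v^n)$ by comparison with the zero control, pass the inequality constraints to the weak limit $v^*$ via the weak-$L^2$ argument of Theorem~\ref{th:existence}, and establish optimality by feeding a constant-speed continuous competitor into the discrete problems once $\varepsilon^n$ dominates the normal-approximation error. The only cosmetic difference is that the paper takes this competitor to be an optimal $v$ for Problem~\ref{prob:1} (which already has constant speed $\le\sqrt{2D(S_0,S_1)}$) and selects $\varepsilon^n$ directly from the resulting Gronwall bound, whereas you reparametrize an arbitrary admissible $v$ and pick $\varepsilon^n$ with $\varepsilon^n/\delta^n\to\infty$; these are equivalent devices.
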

\begin{proof}
First of all, since $\int_0^1\Vert v^n(t)\Vert^2\leq 2 D(S^n_0,S_1)$ and $D(S^n_0,S_1)\rightarrow D(S_0,S_1)$, we see that $(v^n)_{n\in\mathbb N}$ is bounded in $L^2(0,1;V)$ and therefore contained in a weak compact subset. We can assume that $(v^n)_{n\in\mathbb N}$ is weakly convergent without loss of generality. Let $v^*$ be the weak limit of $(v^n)$. We start by proving that $v^*$ satisfies the constraints of Problem \ref{prob:1}. Let $(\bsq^n, \boldsymbol N^n)$ denote the states for the discrete problem.
Also let $q_k^*(t)$ satisfy $\partial_tq_k^*(t)=v^*(t,q_k(t))$ with $q^*_k(0)=q_{0,k}$, $\partial_tS^*(t)=v^*(t,S^*(t))$ with $S^*(0)=S^0$, and $N_u^*(t)$ be the unit normal vector field to $S^*(t)$. 

Then, using the same method as in the previous theorem, we get that $\partial_tq_k^n=v(\cdot,q_k^n(\cdot))$ converges weakly to $\partial_tq_k^*$ in $L^2$ and that $N_k^n(\cdot)/\vert N_k^n(\cdot)\vert$ converges strongly to $N_u^*(t,q^*_k(t))$ in $\mathcal C^0$. As a direct consequence, for every $f:[0,1]\rightarrow\mathbb R$ smooth nonnegative function, we easily obtain $\int_0^1\partial_tq^*_k(t)\cdot N_u^*(t,q^*_k(t))f(t)dt\leq 0$,
so that $\partial_tq^*_k(t)\cdot N_u^*(t,q^*_k(t))=v^*(t,q^*_k(t))\cdot N_u^*(t,q^*_k(t))\leq 0$ for almost every $t$ and every $k$. Therefore, $v^*$  satisfies the constraints of Problem \ref{prob:1}. 

We still need to prove that $v^*$ is optimal. Let $v$ be a solution of Problem \ref{prob:1}. Then $t\mapsto \Vert v(t)\Vert$ is constant and no greater than $\sqrt{2D(S_0,S_1)}$. Let $\varphi$ be the flow of $v$. Using Gronwall's estimates, one obtains a sequence $\varepsilon^n \to 0$ such that $v(t,\varphi(t,q^n_{0,k}))\cdot \frac{d\varphi^{-T}(t,q^n_{0,k})N^n_{0,k}}{{\vert d\varphi^{-T}(t,q^n_{0,k})N^n_{0,k}\vert}}\leq \varepsilon^n$, with $\varepsilon^n$ depending only on $V$, $D(S_0,S_1)$, and the value of $\left\vert N(q^n_{k,0})-N^n_{k,0}/\vert N^n_{k,0} \vert\right\vert$.
Hence, $v$ satisfies the constraints of the discrete problems, so that
 \[
 \frac12 \int_0^1\Vert v^n(t)\Vert_V^2dt +D(S^n(1),S_1)\leq \frac12 \int_0^1\Vert v(t)\Vert_V^2dt +D(\varphi(1,S^n_0),S_1). 
 \]
 Letting $n$ go to infinity, we get the same inequality with $v^*$ and $S^*$ in place of $v^n$ and $S^n$,
so that $v^*$ is optimal.

The value of $\varepsilon^n$ can be explicitely bounded from above with respect to the $\mathcal{C}^2$-norm of a parametrization of $S_0$. Note that this proof can be adapted to the case of Problem \ref{prob:3}, although the exact values of $\varepsilon^n$ may differ. 
\end{proof}

\section{Affine Alignment}
\label{sec:affine}
Because the RKHS $V$ is embedded in $C^p_0(\mathbb R^3, \mathbb R^3)$, vector fields $v\in V$ vanish at infinity. This implies, in particular, that affine transformations do not belong to this Hilbert space, and that the diffeomorphic registration does not incorporate any rigid alignment.  If such an alignment is needed, one can include it in the optimal control framework by completing the control with the corresponding vector fields. 

Let the registration be computed over  $G\ltimes \mathbb R^3$, where $G$ is a subgroup of  $\mathit{GL}_3(\mathbb R)$, $\ltimes$ referring to the semi-direct product extending $G$ with translations to obtain affine transformations. Let $\mathfrak g$ be the Lie algebra of $G$, with basis $E_1, \ldots, E_h$. Instead of $v\in V$, we use a control given by $(v, \beta_1, \ldots, \beta_h, \tau)$ with $\beta_1, \ldots, \beta_h\in \mathbb R$ and $\tau\in \mathbb R^3$, and the state equation
\begin{equation}
\label{eq:aff.1}
\partial_t q_k(t) = v(t, q(t)) + \sum_{l=1}^h \beta_l(t) E_l q(t) + \tau(t)
\end{equation}
with associated cost
$
\frac12 \int_0^1 \left(\|v(t)\|_V^2 + \sum_{k=1}^h c_k\beta_k(t)^2 + c_0 |\tau(t)|^2\right) dt$
for some non-negative numbers $c_0, c_1, \ldots, c_h$. The extension of the numerical procedure to this setting is rather straightforward, and not detailed here. Of course, the affine components must be added to $v$ in the atrophy constraint $v\cdot N \leq 0$.\\

Consider the special case $G = \mathit{SO}_3$  is the rotation group (so that $h=3$) and assume that Euclidean transformations act as isometries on $V$, which means that, for all $v\in V$, $R\in \mathit{SO}_3$, $b\in \mathbb R^3$, the vector field $\tilde v: x \mapsto R^T v(Rx + b)$ also belongs to $V$ and $\|\tilde v\|_V = \|v\|_V$. Euclidean-invariant RKHS's of vector fields are extensively described in \cite{glaunes2014Matrix}, to which we refer for more details. In the case of scalar kernels $\mathbb K(x,y) = K(x,y) \mathit{Id}_{\mathbb R^3}$, Euclidean invariance implies that $K$ is a radial kernel, i.e., that $K(x,y) = \gamma(|x-y|^2)$ for some function $\gamma$ (additional conditions on $\gamma$ are needed to ensure that $K$ is a positive kernel; see \cite{glaunes2014Matrix}). Assume finally that the end-point cost $D$ is invariant by Euclidean transformation: $D(S, S') = D(RS+b, RS'+b)$. In this case, the optimal control problem (using $c_0=\cdots=c_3=0$) that minimizes $\frac12 \int_0^1 \|v(t)\|_V^2 dt + D( S(1), S_1 )$ in $v, \boldsymbol \beta, \tau$, subject to
\begin{equation}
\label{eq:aff1.2}
\begin{cases}
q(0) = q_0, \\
\partial_t q(t) = v(t, q(t)) + \sum_{l=1}^3 \beta_l(t) E_lq(t) + \tau(t)\\
\partial_t N(t, \cdot) = - \left(dv(t,q(t, \cdot)) + \sum_{l=1}^3 \beta_l(t) E_l\right)^T N(t, \cdot)\\
\left(v(t, q(t)) + \sum_{l=1}^3 \beta_l(t) E_lq(t) + \tau(t)\right) \cdot N(\boldsymbol q(t)) \leq 0
\end{cases}
\end{equation}
is equivalent to minimizing 
$\frac12 \int_0^1 \|\tilde v(t)\|_V^2  dt + D(\tilde S(1),  R_1S_1+b_1 )$ in $\tilde v, R_1, b_1$, subject to
\begin{equation}
\label{eq:aff2.2}
\begin{cases}
\displaystyle
\tilde q(0) = q_0, \\
\displaystyle
\partial_t \tilde q(t) = \tilde v(t, \tilde q(t)) \\
\displaystyle
\partial_t \tilde N(t, \cdot) = - d\tilde v(t,\tilde q(t, \cdot))^T \tilde N(t, \cdot)\\
\displaystyle
\tilde v(t, \tilde q(t))  \cdot \tilde N(t,  \tilde q(t)) \leq 0
\end{cases}
\end{equation}
via the change of variable $q(t) = R(t) \tilde q(t) + b(t)$, $v(t, x) = R(t)^{-1} \tilde v(t, Rx+b)$, $N(t) = R(t) \tilde N(t)$, with 
$\partial_t R = \sum_{l=1}^3 \beta_l(t) E_lR$, $\partial_t b = \sum_{l=1}^3 \beta_l(t) E_l b(t) + \tau(t)$,
$R_1 = R(1)^{-1}$ and  $b_1 = -R(1)^{-1}b(1)$.
In other terms, Euclidean registration via optimal control using  \eqref{eq:aff1.2} is equivalent to the original atrophy-constrained LDDMM optimizing its target over an orbit under the action of rigid transformations. 

Note that $c_0 = \cdots = c_h = 0$ should not be used with general affine transformations when non-compact components of the affine group are added to rotations. Intuitively, this would allow small deformations to be scaled up to larger ones at zero cost, and one can conjecture that the associated optimal control problem has no solutions, and admits minimizing sequences of controls with vanishing cost at the limit. The equivalence with a problem in which the target is allowed to vary over its orbit via affine transformations is not true either for groups larger than the Euclidean one, essentially because invariant kernels do not exist in such cases.  \\

Some attention should be paid to  the time discretization, in particular in the rigid case. In our implementation, we use a simple Euler scheme to discretize the equation $\partial_t q = v(t,q)$, i.e., we take
$q(t+\delta t) = q(t) + \delta t\, v(t, q(t))$. 
When using rigid registration, however, we take, with $A(t) = \sum_{l=1}^3 \beta_l(t) E_l$ a skew-symmetric matrix
\[
q(t+\delta t) = e^{\delta t\, A(t)} q(t) + \delta t\, v(t, q(t)) + \delta t\, \tau(t)
\] 
to discretize $\partial_t q = v(t,q) + Aq + \tau$, with the explicit expression 
$e^U = \mathrm{Id} + \frac{\sin c_U}{c_U} U + \frac{1 - \cos c_U}{c_U^2} U^2$,  $c_u = \sqrt{-\mathrm{tr}(U^2)}$
for a 3 by 3 skew-symmetric matrix $U$. This ensures that the rigid registration remains a displacement, even for large values of the coefficients $\beta_l$, which are made possible by the absence of cost on this transformation.

\section{Global Volume Constraint}
\label{sec:volume}
Ensuring that the total volume of the surface decreases (instead of enforcing inward motion at every point) can be done very similarly to the full atrophy constraint, using the single constraint
$\sum_{k=1}^n v(t, q_k(t))\cdot N_k(t)\leq 0$
or, after reduction
$
\sum_{k,l=1}^n K(q_k(t), q_l(t)) \alpha_l(t)\cdot N_k(t) \leq 0$,
where $N_k$ is given by \eqref{eq:n2}. It is important here to use the area-weighted normal,  to discretize the continuous constraint
$\int_{S(t)} v(t, \cdot) \cdot N(t, \cdot) \mathit{vol}_{S(t)} \leq \varepsilon$,
which provides the derivative of the total volume with respect to time (where $\mathit{vol}_{S(t)}$ is the volume form on $S(t)$). This constraint can be rewritten as $\mathbf 1_n^T C(\bsq(t))\bsal(t) \leq \varepsilon$, where $\mathbf 1_n$ is the $n$-dimensional vector with all coordinates equal to 1. The reformulation of the augmented Lagrangian method for this scalar constraint is straightforward and left to the reader.

\section{Experimental Results}
\label{sec:exp}

Fig. \ref{fig:1} provides three examples of segmented hippocampus surfaces taken from the BIOCARD longitudinal study \cite{miller2014amygdalar,younes2014inferring}. Each row corresponds to a different subject,  starts with a baseline image, and then compares the outputs of unconstrained, volume-constrained and atrophy-constrained  surface registration when mapping to a follow-up surface. The color map is proportional to the total normal displacement during the estimated deformation. The segmented follow-up of the first two subjects is slightly larger in volume than the baseline and this is corrected by the two constrained method. The deformation pattern in the volume-constrained approach is however very similar to the unconstrained case (the deformed surface is only slightly, and almost uniformly, smaller in the constrained case, no ensure that no volume is gained). The deformation pattern in the atrophy-constrained case is completely different, since it can only include atrophy (blue). The last subject had significant volume lost, so that the volume-constrained and the unconstrained registrations return identical results, both having a few regions with outward growth, which disappear in the atrophy-constrained case.   
\begin{figure}[h]
\centering
\includegraphics[width=0.15\linewidth]{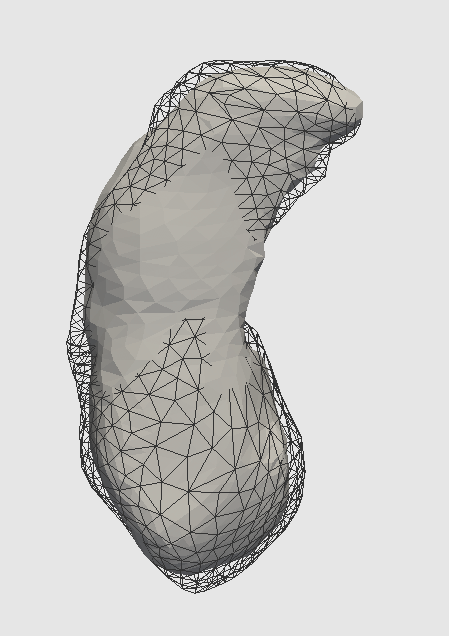}
\includegraphics[width=0.15\linewidth]{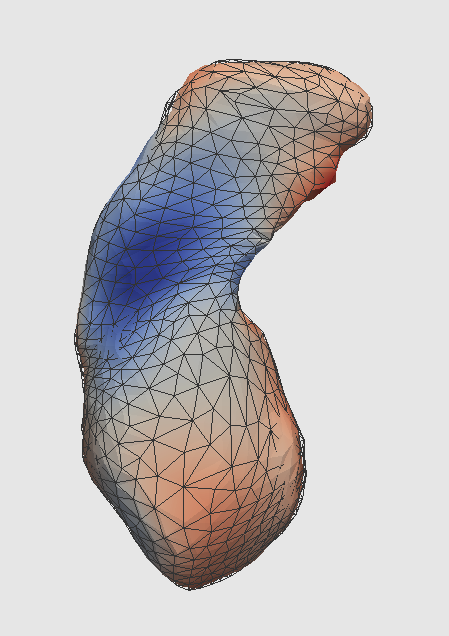}
\includegraphics[width=0.15\linewidth]{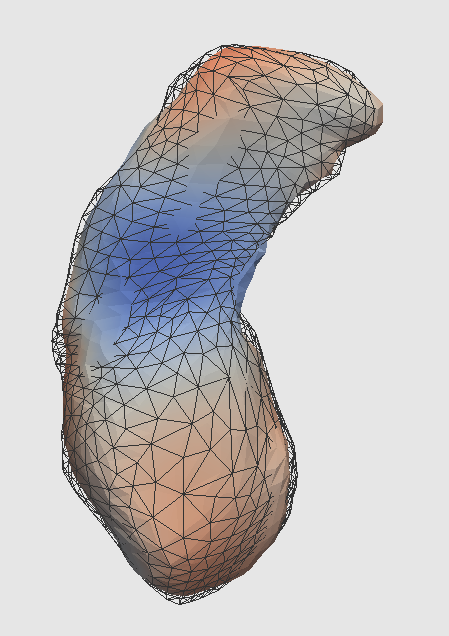}
\includegraphics[width=0.15\linewidth]{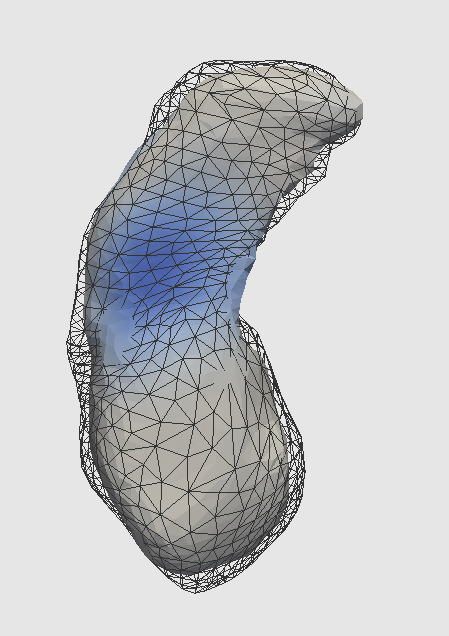}
\includegraphics[trim=6cm 5cm 6cm 5cm, clip,width=0.065\linewidth]{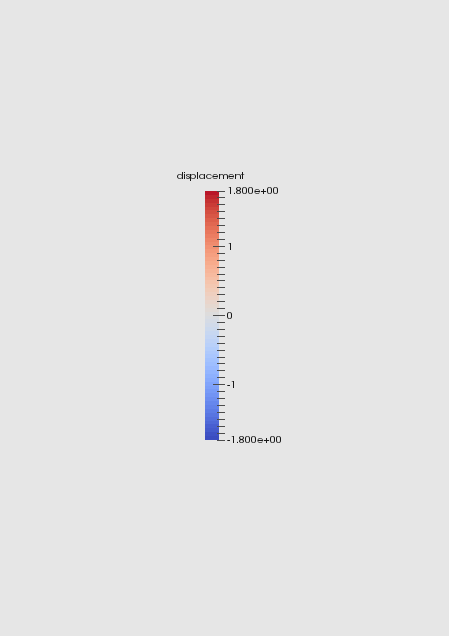}\\
\includegraphics[width=0.15\linewidth]{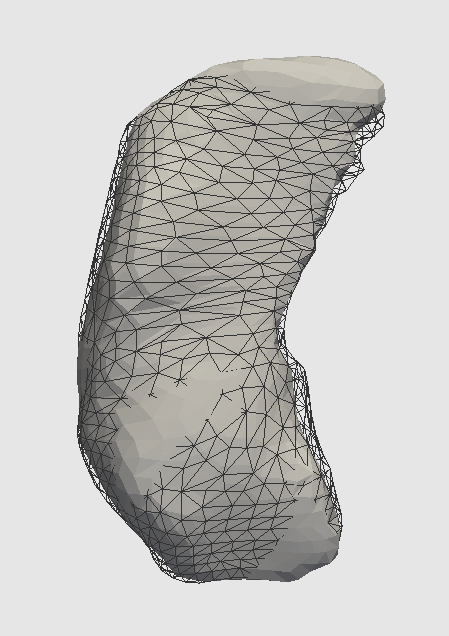}
\includegraphics[width=0.15\linewidth]{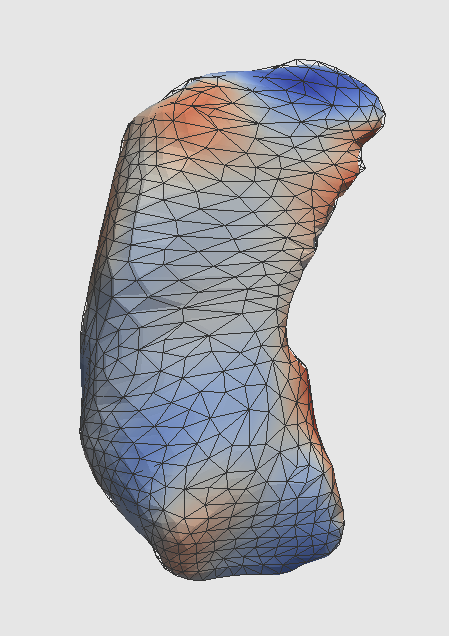}
\includegraphics[width=0.15\linewidth]{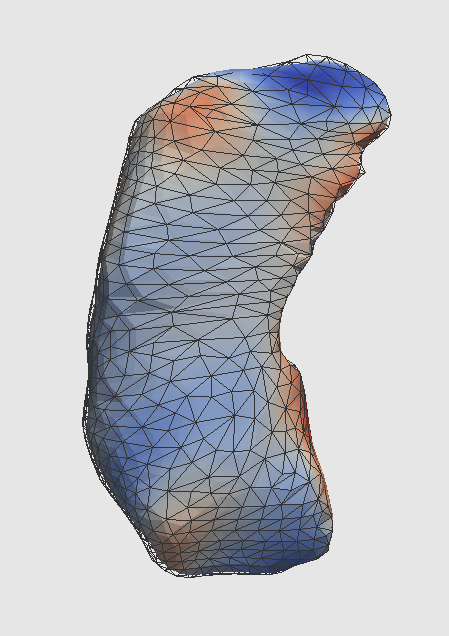}
\includegraphics[width=0.15\linewidth]{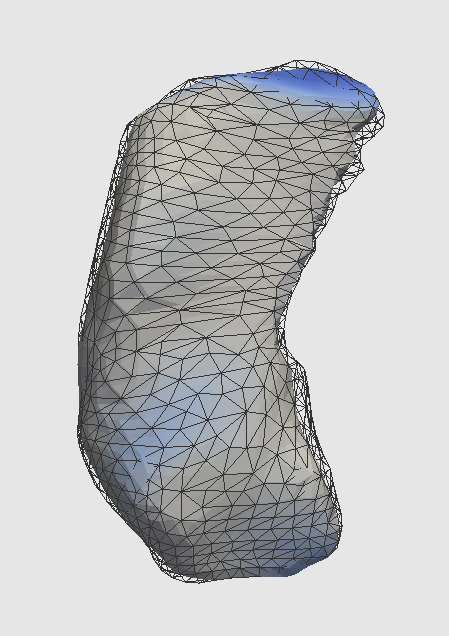}
\includegraphics[trim=6.4cm 5cm 5.6cm 5cm, clip,width=0.065\linewidth]{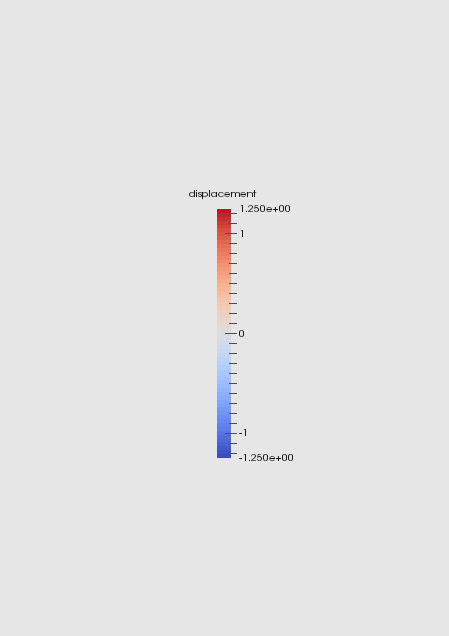}\\
\includegraphics[width=0.15\linewidth]{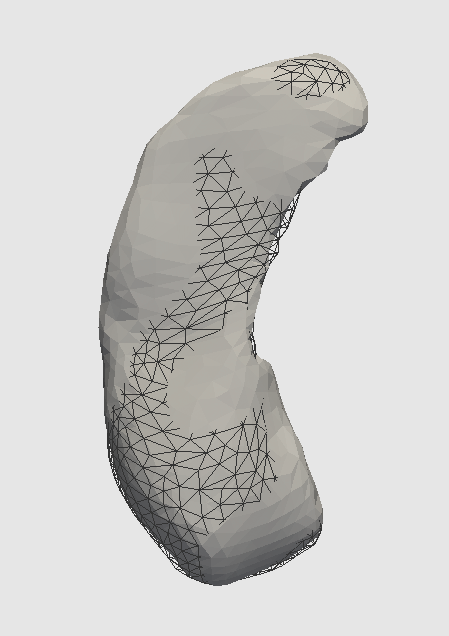}
\includegraphics[width=0.15\linewidth]{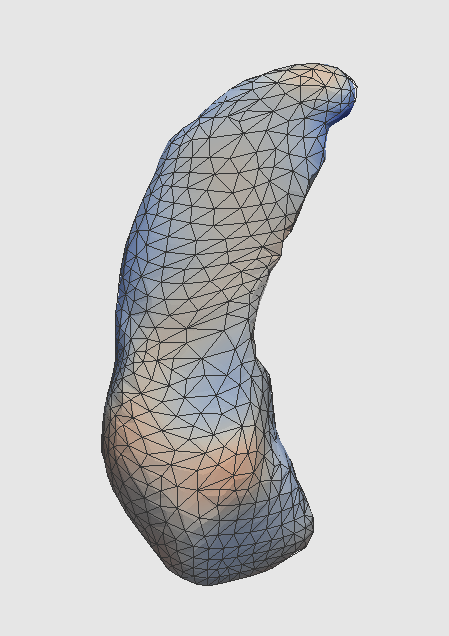}
\includegraphics[width=0.15\linewidth]{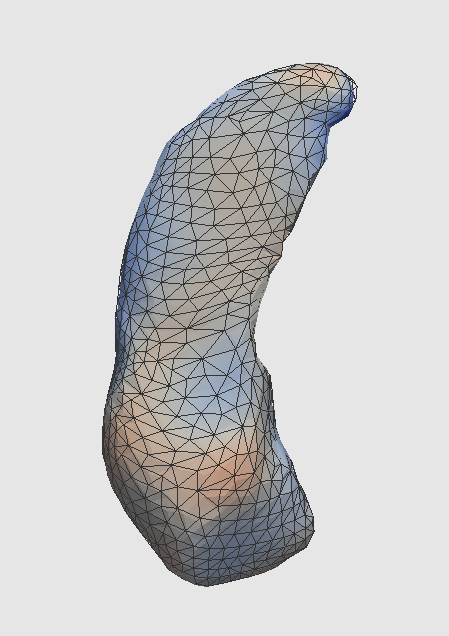}
\includegraphics[width=0.15\linewidth]{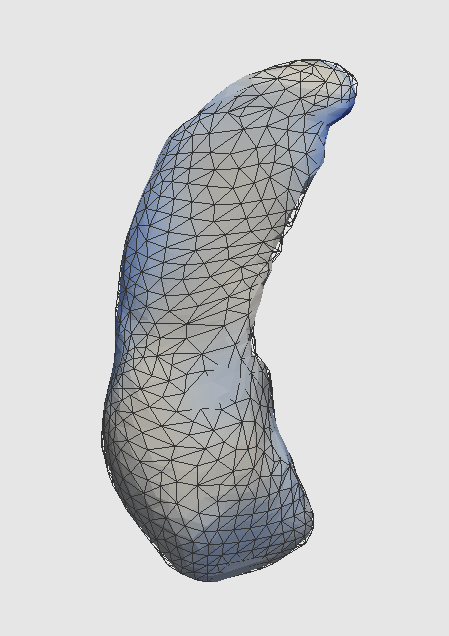}
\includegraphics[trim=6cm 5cm 6cm 5cm, clip,width=0.065\linewidth]{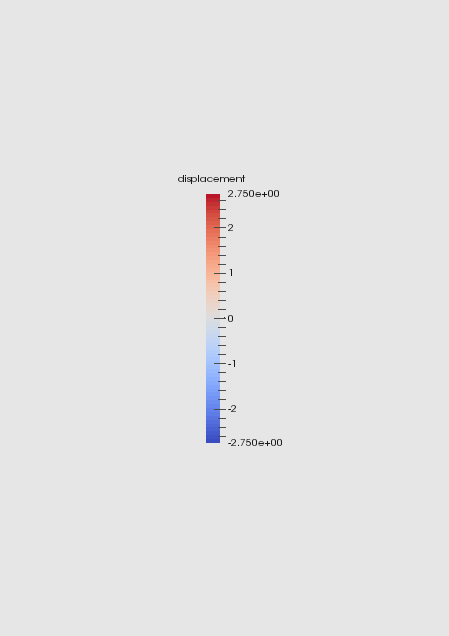}\\
\caption{\label{fig:1} Follow-up image is shown in wireframe in all views. From left to right:  baseline; LDDMM surface registration (no constraint); registration with non-increasing volume constraint; registration with atrophy constraint} 
\end{figure}

\bibliographystyle{plain}
\bibliography{refShape}

\end{document}